\documentclass[12pt,a4paper]{amsart}

\usepackage{amsmath}
\usepackage{amssymb}
\usepackage{amsfonts,amsthm,mathdots,url,tabu}
\usepackage{cite}
\usepackage{pdflscape}

\newcommand{\M}{\mathcal{M}}
\newcommand{\QM}{\mathcal{QM}}
\newcommand{\QS}{\mathcal{QS}}
\newcommand{\QE}{\mathcal{QE}}
\newcommand{\QEz}{\mathcal{Q}\hspace*{-1pt}^{^0}\hspace*{-3pt}\mathcal{E}}

\renewcommand{\S}{\mathcal{S}}
\newcommand{\C}{\mathbb{C}}

\newcommand{\id}{\mathrm{id}}

\newcommand{\BigOh}{\mathcal{O}}

\newtheorem{prop}{Proposition}
\newtheorem{theorem}{Theorem}

\theoremstyle{definition}

\newtheorem*{acknowledgement}{Acknowledgement}
\begin{document}
\title[Extremal quasimodular forms]%
{Asymptotic expansions for the coefficients of extremal quasimodular forms and
  a conjecture of Kaneko and Koike}
\author{Peter J. Grabner}
\thanks{The author is supported by the Austrian Science Fund FWF project F5503
  (part of the Special Research Program (SFB) ``Quasi-Monte Carlo Methods:
  Theory and Applications'')}
\address{Institut f\"ur Analysis und Zahlentheorie,
Technische Universit\"at Graz,
Kopernikusgasse 24,
8010 Graz,
Austria}
\email{peter.grabner@tugraz.at}
\begin{abstract}
  Extremal quasimodular forms have been introduced by M.~Kaneko and M.~Koike
  as quasimodular forms which have maximal possible order of vanishing at
  $i\infty$.  We show an asymptotic formula for the Fourier coefficients of
  such forms. This formula is then used to show that all but finitely many
  Fourier coefficients of such forms of depth $\leq4$ are positive, which
  partially solves a conjecture stated by M.~Kaneko and M.~Koike. Numerical
  experiments based on constructive estimates confirm the conjecture for
  weights $\leq200$ and depths between $1$ and $4$.
\end{abstract}
\maketitle
\section{Introduction}\label{sec:introduction}
``Quasimodular forms'' as a notion were introduced by M.~Kaneko and D.~Zagier
in \cite{Kaneko_Zagier1995:generalized_jacobi_theta}. They have found
applications in various areas of mathematics and are of interest on their own.
For excellent introductions to the subject we refer to
\cite{Royer2012:quasimodular_forms_introduction,
  Zagier2008:elliptic_modular_forms,Choie_Lee2019:jacobi_like_forms}.

In \cite{Kaneko_Koike2006:extremal_quasimodular_forms} M.~Kaneko and M.~Koike
introduced the notion of \emph{extremal quasimodular forms}. These are
quasimodular forms of weight $w$ and depth $r$, which show extremal order of
vanishing at $z=i\infty$ amongst all forms of that weight and depth. The
authors conjectured certain arithmetic properties of the Fourier coefficients
of these forms for depth $r\leq4$. These were established in
\cite{Grabner2020:quasimodular_forms}; in
\cite{Pellarin_Nebe2020:extremal_quasi_modular} and
\cite{Mono2020:conjecture_kaneko_koike} these were proved independently for
$r=1$. A second part of the conjecture stated in
\cite{Kaneko_Koike2006:extremal_quasimodular_forms} concerned the positivity of
the Fourier coefficients of extremal quasimodular forms. In this paper we prove
that for any $w$ and $r\leq4$ all but possibly finitely many Fourier
coefficients are positive. Using a bound given by P.~Jenkins and R.~Rouse
\cite{Jenkins_Rouse2011:bounds_coefficients_cusp} we could verify the
conjecture for $1\leq r\leq4$ and $w\leq200$.

\section{Notation and preliminary results}
\label{sec:notat-prel-results}

In this section we collect some basic facts about modular and quasimodular
forms.
\subsection{Modular forms}\label{sec:modular-forms}
The modular group $\Gamma$ is the group of $2\times2$-matrices with integer
entries and determinant $1$
\begin{equation*}
  \Gamma=\mathrm{PSL}(2,\mathbb{Z})=\left\{
    \begin{pmatrix}
      a&b\\c&d
    \end{pmatrix}\Bigm| a,b,c,d\in\mathbb{Z}, ac-bd=1
  \right\}/\{\pm I\}.
\end{equation*}
It acts on the upper
half plane $\mathbb{H}=\{z\in\mathbb{C}\mid\Im z>0\}$ by M\"obius
transformation
\begin{equation*}
  \begin{pmatrix}
      a&b\\c&d
    \end{pmatrix}z=\frac{az+b}{cz+d}.
\end{equation*}
The group $\Gamma$ is generated by
\begin{equation}
  \label{eq:ST}
  Sz=-\frac1z\quad Tz=z+1,
\end{equation}
which satisfy the relations $S^2=\id$ and $(ST)^3=\id$.  A holomorphic function
$f:\mathbb{H}\to\mathbb{C}$ is called a \emph{holomorphic modular form
  of weight $w$}, if it satisfies
\begin{equation}
  \label{eq:modular}
  (cz+d)^{-w}f\left(\frac{az+b}{cz+d}\right)=f(z)
\end{equation}
for all $z\in\mathbb{H}$ and all $\bigl( \begin{smallmatrix}a & b\\ c &
  d\end{smallmatrix}\bigr)\in\Gamma$, and the limit
\begin{equation*}
  f(i\infty):=\lim_{\Im z\to+\infty}f(z)
\end{equation*}
exists.  The vector space $\M_w(\Gamma)$ of holomorphic modular forms is
non-trivial only for even $w\geq4$ and $w=0$. Its dimension equals
\begin{equation*}
  \dim\M_w(\Gamma)=
  \begin{cases}
    \left\lfloor\frac w{12}\right\rfloor&\text{for }w\equiv 2\pmod{12}\\
    \left\lfloor\frac w{12}\right\rfloor+1&\text{otherwise.}
  \end{cases}
\end{equation*}
Prominent examples of modular forms are the Eisenstein series
\begin{equation}
  \label{eq:eisenstein-2k}
  E_{2k}(z)=\frac1{2\zeta(2k)}\sum\limits_{(m,n)\in\mathbb{Z}\setminus\{(0,0)\}}
  \frac1{(mz+n)^{2k}}
\end{equation}
for $k\geq2$, which are modular forms of weight $2k$. They admit a Fourier
expansion (setting $q=e^{2\pi iz}$ as usual in this context)
\begin{equation}\label{eq:eisenstein-fourier}
  E_{2k}=1-\frac{4k}{B_{2k}}\sum_{n=1}^\infty\sigma_{2k-1}(n)q^n,
\end{equation}
where $\sigma_\alpha(n)=\sum_{d\mid n}d^\alpha$ denotes the divisor sum of
order $\alpha$ and $B_{2k}$ denote the Bernoulli numbers. The defining series
\eqref{eq:eisenstein-2k} does not converge for $k=1$ in the given
form. Nevertheless, the series \eqref{eq:eisenstein-fourier} converges for
$k\geq1$.  This entails a slightly more complicated transformation behaviour
under the action of $S$
\begin{equation}
  \label{eq:E2S}
  z^{-2}E_2(Sz)=E_2(z)+\frac6{\pi iz}.
\end{equation}

Every holomorphic modular form can be expressed as a complex polynomial in
$E_4$ and $E_6$, furthermore
\begin{equation*}
  \bigoplus_{k=0}^\infty \M_{2k}(\Gamma)=\C[E_4,E_6].
\end{equation*}
By the invariance under $T$, every holomorphic modular form $f$ has a Fourier
expansion
\begin{equation*}
  f(z)=\sum_{n=0}^\infty a_f(n)e^{2\pi inz}=\sum_{n=0}^\infty a_f(n)q^n.
\end{equation*}
In the sequel we will follow the convention to freely switch between dependence
on $z$ and $q$.

A holomorphic form $f$ is called a \emph{cusp form}, if $f(i\infty)=0$. The
prototypical example of a cusp form is
\begin{equation}
  \label{eq:Delta}
  \Delta=\frac1{1728}\left(E_4^3-E_6^2\right).
\end{equation}
The space of cusp forms is denoted by $\mathcal{S}_w(\Gamma)$. Since we only
deal with modular forms for the full modular group $\Gamma$, we will omit
reference to the group in the sequel.

For a detailed introduction to the theory of modular forms we refer to
\cite{Shimura2012:modular_forms_basics,
  Berndt_Knopp2008:heckes_theory_modular,
  Bruinier_Geer_Harder+2008:1_2_3-modular,
  Stein2007:modular_forms_computational,
  Diamond_Shurman2005:first_course_modular,
  Iwaniec1997:topics_classical_automorphic,
  Lang1995:introduction_to_modular}.

\subsection{Quasimodular forms}\label{sec:quasimodular-forms}
The vector space of quasimodular forms of weight $w$ and depth $\leq r$ is
given by
\begin{equation}
  \label{eq:quasi-space}
  \QM_w^{r}=\bigoplus_{\ell=0}^rE_2^\ell\M_{w-2\ell}.
\end{equation}
Quasimodular forms occur naturally as derivatives of modular forms (see
\cite{Royer2012:quasimodular_forms_introduction,
  Zagier2008:elliptic_modular_forms,Choie_Lee2019:jacobi_like_forms}). This
aspect will be used and elaborated later.

Throughout this paper we use the notation
\begin{equation*}
  Df=\frac1{2\pi i}\frac{df}{dz}=q\frac{df}{dq}.
\end{equation*}
Higher derivatives are always expressed as powers of $D$. Upper
indices will never denote derivatives.

With this notation Ramanujan's identities read
\begin{equation}
  \label{eq:ramanujan}
  \begin{split}
    DE_2&=\frac1{12}\left(E_2^2-E_4\right)\\
    DE_4&=\frac13\left(E_2E_4-E_6\right)\\
    DE_6&=\frac12\left(E_2E_6-E_4^2\right)\\
    D\Delta&=E_2\Delta.
  \end{split}
\end{equation}
These give rise to the definition of the Ramanujan-Serre derivative
\begin{equation*}
  \partial_wf=Df-\frac w{12}E_2f,
\end{equation*}
where $w$ is (related to) the weight of $f$. We will use the product rule
\begin{equation*}
  \partial_{w_1+w_2}(fg)=\left(\partial_{w_1}f\right)g+
  f\left(\partial_{w_2}g\right)
\end{equation*}
and also make frequent use of the following immediate consequences of
\eqref{eq:ramanujan}
\begin{equation}
  \label{eq:serre-ramanujan}
  \begin{split}
    \partial_1E_2&=-\frac1{12}E_4\\
    \partial_4E_4&=-\frac13E_6\\
    \partial_6E_6&=-\frac12E_4^2\\
    \partial_{12}\Delta&=0.
  \end{split}
\end{equation}
From the second and third equation together with the fact that every
holomorphic modular form is a polynomial in $E_4$ and $E_6$, it follows
immediately that for a form $f\in\M_w$ we have $\partial_wf\in\M_{w+2}$, and
for $f\in\mathcal{S}_w$ we have $\partial_wf\in\mathcal{S}_{w+2}$.

We set
\begin{equation*}
  \QS_w^r=\bigoplus_{\ell=0}^rE_2^\ell\S_{w-2\ell}=\Delta\QM_{w-12}^r
\end{equation*}
for the space of quasimodular forms with cusp form coefficients for all powers
of $E_2$. For the spaces of quasimodular forms we have the alternative
descriptions as
\begin{equation}\label{eq:QM-D}
  \QM_w^r=\bigoplus_{\ell=0}^rD^\ell\M_{w-2\ell},
\end{equation}
see for instance \cite[Proposition~14.3]{Choie_Lee2019:jacobi_like_forms}, and
\begin{equation}\label{eq:QS-D}
  \QS_w^r=\bigoplus_{\ell=0}^rD^\ell\S_{w-2\ell}.
\end{equation}
The second decomposition follows from the last equation in
\eqref{eq:ramanujan}.  The direct sum in \eqref{eq:QM-D} can be further refined
as
\begin{equation}\label{eq:QM-decomp}
  \QM_w^r=\bigoplus_{\ell=0}^rD^\ell\left(\S_{w-2\ell}\oplus\C
    E_{w-2\ell}\right)=
  \QS_w^r\oplus\bigoplus_{\ell=0}^r\C D^\ell E_{w-2\ell}.
\end{equation}

We set
\begin{equation}
  \label{eq:eisenstein}
  \QE_w^r=\QM_w^r/\QS_w^r
\end{equation}
the ``Eisenstein space''. We write $\overline{f}$ for $f+\QS_w^r$. Notice that
this notation implicitly depends on $w$ and $r$. Then $D$ maps $\QE_w^r$ to
$\QE_{w+2}^{r+1}$. Similarly, $\partial_{w-r}$ maps $\QE_{w}^r$ to
$\QE_{w+2}^r$ by \cite[Lemma~2.2]{Grabner2020:quasimodular_forms}. Notice that
it makes sense to define $\overline{f}(i\infty)$ for
$\overline{f}\in\QE_w^r$. Thus we can define
\begin{equation*}
  \QEz_w^r=\{\overline{f}\in\QE_w^r\mid \overline{f}(i\infty)=0\}.
\end{equation*}
 Then for $w\geq2r+4$
\begin{equation*}
  \{\overline{D^\ell E_{w-2\ell}}\mid \ell=0,\ldots,r\}
\end{equation*}
is a basis of $\QE_w^r$, and
\begin{equation*}
  \{\overline{D^\ell E_{w-2\ell}}\mid \ell=1,\ldots,r\}
\end{equation*}
is a basis of $\QEz_w^r$.

Notice that for $v,w\geq4$
\begin{equation*}
  E_{v+w}-E_vE_w
\end{equation*}
is a cusp form of weight $v+w$, which allows to write
\begin{equation}\label{eq:EwEv}
  \overline{E_vE_w}=\overline{E_{v+w}}.
\end{equation}

Using the definition of the Serre derivative, we obtain
\begin{equation*}
  \partial_wE_w=DE_w-\frac w{12}E_2E_w,
\end{equation*}
which is a modular form of weight $w+2$ with constant coefficient $-\frac
w{12}$. Thus we have
\begin{equation*}
  \partial_wE_w=-\frac w{12}E_{w+2}+\text{cusp form},
\end{equation*}
which we write as
\begin{equation}
  \label{eq:serre-Ew}
  \overline{\partial_wE_w}=\partial_w\overline{E_w}=
  -\frac w{12}\overline{E_{w+2}}.
\end{equation}
Using this we obtain
\begin{equation*}
  \partial_{w-r}\overline{E_{w-2\ell}E_2^\ell}=
  \overline{\left(\partial_{w-2\ell}E_{w-2\ell}\right)E_2^\ell}+
  \overline{E_{w-2\ell}\left(\partial_{2\ell-r}E_2^\ell\right)},
\end{equation*}
from which we derive
\begin{equation}
  \label{eq:serre-EwE2}
  \begin{split}
    \partial_{w-r}\overline{E_{w-2\ell}E_2^\ell}&=
    -\frac\ell{12}\overline{E_{w-2\ell}E_4E_2^{\ell-1}}
    -\frac{w-2\ell}{12}\overline{E_{w-2\ell+2}E_2^\ell}-
    \frac{\ell-r}{12}\overline{E_{w-2\ell}E_2^{\ell+1}}\\
    &=-\frac\ell{12}\overline{E_{w-2\ell+4}E_2^{\ell-1}}
    -\frac{w-2\ell}{12}\overline{E_{w-2\ell+2}E_2^\ell}-
    \frac{\ell-r}{12}\overline{E_{w-2\ell}E_2^{\ell+1}}.
  \end{split}
\end{equation}
Here we have used
\begin{equation*}
  \partial_{2\ell-r}E_2^\ell=-\frac\ell{12}E_4E_2^{\ell-1}
  -\frac{\ell-r}{12}E_2^{\ell+1}.
\end{equation*}

Consider the forms
\begin{equation}\label{eq:f_wk}
  f_w^{(k)}=\sum_{\ell=0}^k(-1)^\ell\binom k\ell E_{w-2\ell}E_2^\ell,
\end{equation}
where we set $E_0=1$ and omit the term for $w-2\ell=2$, which only occurs if
$w\leq2k+2$.
We compute using \eqref{eq:serre-EwE2}
\begin{align*}
  &\partial_{w-r}\overline{f_w^{(k)}}=\sum_{\ell=0}^k(-1)^\ell\binom k\ell\\
  &\times
  \left(-\frac\ell{12}\overline{E_{w-2\ell+4}E_2^{\ell-1}}
    -\frac{w-2\ell}{12}\overline{E_{w-2\ell+2}E_2^\ell}
    -\frac{\ell-r}{12}\overline{E_{w-2\ell}E_2^{\ell+1}}\right).
\end{align*}
Expanding the binomial coefficients and shifting the summation index gives
\begin{align*}
  &\partial_{w-r}\overline{f_w^{(k)}}=-\frac1{12}
  \sum_{\ell=0}^{k+1}(-1)^\ell \overline{E_{w-2\ell+2}E_2^\ell}\\
  &\times\left(-k\binom{k-1}\ell+w\binom k\ell
    -2k\binom{k-1}{\ell-1}-k\binom{k-1}{\ell-2}+r\binom k{\ell-1}\right),
\end{align*}
where we have set $\binom km=0$ for $m<0$ and $m>k$. The term in parenthesis is
then equal to
\begin{equation*}
  (w-r)\binom k\ell+(r-k)\binom{k+1}\ell,
\end{equation*}
which gives
\begin{equation}\label{eq:Serre-fk}
  \partial_{w-r}\overline{f_w^{(k)}}=-\frac{w-r}{12}\overline{f_{w+2}^{(k)}}
  -\frac{r-k}{12}\overline{f_{w+2}^{(k+1)}}
\end{equation}
for $k=0,\ldots,r$ and $w\geq2r+2$ (the case $k=r$ and $w=2r+2$ has to be
checked separately).

We also have for $w\geq4$
\begin{equation}\label{eq:DEw}
  \overline{D^kE_w}=(-1)^k\frac{(w)_k}{12^k}\overline{f_{w+2k}^{(k)}},
\end{equation}
where $(w)_k=w(w+1)\cdots(w+k-1)$ denotes the Pochhammer symbol. We prove
\eqref{eq:DEw} by induction. For $k=0$ it obviously holds. The induction step
reads as (using \eqref{eq:Serre-fk} for $r=k$) 
\begin{align*}
  \overline{D^{k+1}E_w}&=(-1)^k\frac{(w)_k}{12^k}\overline{Df_{w+2k}^{(k)}}\\
  &=  (-1)^k\frac{(w)_k}{12^k}\left(\overline{\partial_{w+k}f_{w+2k}^{(k)}}
    +\frac{w+k}{12}\overline{E_2f_{w+2k}^{(k)}}\right)\\
  &=  (-1)^{k+1}\frac{(w)_k}{12^{k+1}}\left((w+k)\overline{f_{w+2k+2}^{(k)}}
    -(w+k)\overline{E_2f_{w+2k}^{(k)}}\right)\\
  &=(-1)^{k+1}\frac{(w)_{k+1}}{12^{k+1}}\overline{f_{w+2k+2}^{(k+1)}}.
\end{align*}
Furthermore, we have
\begin{equation}
  \label{eq:f_w+l}
  \overline{E_{v}f_w^{(k)}}=\overline{f_{w+v}^{(k)}}
\end{equation}
for $w\geq2k+4$ and $v\geq4$, which follows from \eqref{eq:EwEv}. For later
reference we notice that
\begin{equation}\label{eq:D-eisenstein}
  D^kE_w(z)=-\frac{2w}{B_w}\sum_{n=1}^\infty n^k\sigma_{w-1}(n)q^n
\end{equation}
for $k\geq1$ and even $w\geq2$.

We will use the following convention for iterated Serre derivatives 
\begin{equation*}
  \partial_w^0f=f,\quad
  \partial_w^{k+1}=\partial_{w+2k}\left(\partial_w^kf\right).
\end{equation*}
With this we get the following expressions for higher derivatives in terms of
Serre derivatives, which we will need later
\begin{equation}
  \label{eq:D-serre}
  \begin{split}
    Df&=\partial_{w-2}f+E_2\frac{w-2}{12}f\\
    D^2f&=\left(\partial_{w-4}^2f-\frac{w-4}{144}E_4f\right)
    +E_2\frac{w-3}6\partial_{w-4}f
    +E_2^2\frac{(w-3)(w-4)}{144}f\\
    D^3f&=\left(\partial_{w-6}^3f-\frac{3w-16}{144}E_4\partial_{w-6}f+
      \frac{w-6}{432}E_6f\right)\\
    &+E_2\left(\frac{w-4}4\partial_{w-6}^2f-\frac{(w-4)(w-6)}{576}E_4f\right)\\
    &+E_2^2\frac{(w-4)(w-5)}{48}\partial_{w-6}f+
    E_2^3\frac{(w-4)(w-5)(w-6)}{1728}f\\
    D^4f&=\left(\partial_{w-8}^4f-\frac{3w-20}{72}E_4\partial_{w-8}^2f
      +\frac{2w-15}{216}E_6\partial_{w-8}f+\frac{(w-8)(w-14)}{6192}E_4^2f
    \right)\\
    &+E_2\left(\frac{w-5}3\partial_{w-8}^3f
      -\frac{(3w-22)(w-5)}{432}E_4\partial_{w-8}f
      +\frac{(w-5)(w-8)}{1296}E_6f\right)\\
    &+E_2^2\left(\frac{(w-5)(w-6)}{24}\partial_{w-8}^2f
      -\frac{(w-5)(w-6)(w-8)}{3456}E_4f\right)\\
    &+E_2^3\frac{(w-5)(w-6)(w-7)}{432}\partial_{w-8}f
    +E_2^4\frac{(w-5)(w-6)(w-7)(w-8)}{20736}f.
  \end{split}
\end{equation}

\begin{prop}\label{prop1}
  Let $g\in\QS_w^r$ be given by its Fourier expansion
  \begin{equation*}
    g(z)=\sum_{n=1}^\infty a(n)q^n.
  \end{equation*}
  Then $a(n)=\BigOh(n^{\frac{w-1}2}\sigma_0(n))$.
\end{prop}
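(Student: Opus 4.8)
The plan is to reduce to the classical estimate for Fourier coefficients of cusp forms by means of the decomposition \eqref{eq:QS-D}. Writing $g=\sum_{\ell=0}^{r}D^{\ell}h_{\ell}$ with $h_{\ell}\in\S_{w-2\ell}$, and denoting the Fourier expansion of each cusp form by $h_{\ell}(z)=\sum_{n\geq1}b_{\ell}(n)q^{n}$, the relation $D=q\,\frac{d}{dq}$ shows that the $n$-th Fourier coefficient of $D^{\ell}h_{\ell}$ equals $n^{\ell}b_{\ell}(n)$. Hence $a(n)=\sum_{\ell=0}^{r}n^{\ell}b_{\ell}(n)$, and since this sum has only $r+1$ terms it suffices to bound each of them separately.

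For a fixed $\ell$ with $\S_{w-2\ell}\neq\{0\}$ I would invoke the Ramanujan--Petersson bound (Deligne's theorem) for cusp forms on the full modular group: expanding $h_{\ell}$ in a basis of normalized Hecke eigenforms and combining multiplicativity of the coefficients with $|b(p)|\leq2p^{(k-1)/2}$ for an eigenform of weight $k$ yields
\begin{equation*}
  b_{\ell}(n)=\BigOh\!\left(n^{\frac{w-2\ell-1}{2}}\sigma_{0}(n)\right).
\end{equation*}
Multiplying by $n^{\ell}$ and observing that the resulting exponent $\ell+\frac{w-2\ell-1}{2}=\frac{w-1}{2}$ is independent of $\ell$, we obtain $n^{\ell}b_{\ell}(n)=\BigOh(n^{(w-1)/2}\sigma_{0}(n))$; summing over $\ell=0,\dots,r$ gives the claim. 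The terms with $\S_{w-2\ell}=\{0\}$ are simply absent and need no discussion.

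The only substantial input is Deligne's bound: the exponent $\frac{w-1}{2}$ is exactly what it provides, whereas Hecke's elementary estimate $b_{\ell}(n)=\BigOh(n^{(w-2\ell)/2})$ would give merely $a(n)=\BigOh(n^{w/2})$, which is too weak to be useful later. Everything else --- the action of $D$ on Fourier coefficients, the availability of \eqref{eq:QS-D}, and the harmless combination of finitely many error terms --- is routine bookkeeping, so I do not expect any genuine obstacle beyond citing the Ramanujan--Petersson estimate correctly.
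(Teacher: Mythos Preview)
Your argument is correct and follows the paper's proof essentially verbatim: decompose via \eqref{eq:QS-D}, apply Deligne's Ramanujan--Petersson bound to each cusp form $h_\ell\in\S_{w-2\ell}$, and absorb the factor $n^\ell$ coming from $D^\ell$ to obtain the uniform exponent $\tfrac{w-1}{2}$. The extra commentary on Hecke's weaker bound and the expansion in Hecke eigenforms is not in the paper but does no harm.
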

\begin{proof}
  Let $g$ first be in $D^\ell\S_{w-2\ell}$ for some $\ell\geq0$. Then $g$ is
  the $\ell$-th derivative of a cusp form $G\in\S_{w-2\ell}$. By Deligne's
  estimate \cite[Th\'eor\`eme~8.2]{Deligne1974:la_conjecture_weil} (see also
  \cite[Section~14.9]{Iwaniec_Kowalski2004:analytic_number_theory}) the Fourier
  coefficients of $G$ are bounded by $\BigOh(n^{\frac{w-1}2-\ell}\sigma_0(n))$
  . The effect of $\ell$-fold derivation is multiplication with $n^\ell$, which
  gives the desired estimate for this special case. Since the same estimate
  holds for all spaces in the direct sum \eqref{eq:QS-D}, it holds for every
  $g$ in $\QS_w^r$.
\end{proof}
\begin{prop}\label{prop2}
  Let $f_w^{(k)}$ be the form given by \eqref{eq:f_wk} and let
  \begin{equation*}
    f_w^{(k)}=\delta_{k,0}+\sum_{n=1}^\infty a_w^{(k)}(n)q^n
  \end{equation*}
  be its Fourier expansion. Then the asymptotic expansion
  \begin{equation}
    \label{eq:a_wk}
    a_w^{(k)}(n)=
    \begin{cases}
      -\frac{2w}{B_w}\sigma_{w-1}(n)&\text{for }k=0\\
      (-1)^{k+1}\frac{2\cdot12^k}{(w-2k+1)_{k-1}B_{w-2k}}n^k\sigma_{w-2k-1}(n)
      &\text{for }k>0\\
      \quad+\BigOh\left(n^{\frac{w-1}2}\sigma_0(n)\right)&
    \end{cases}
  \end{equation}
  holds.
\end{prop}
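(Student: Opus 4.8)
The plan is to obtain \eqref{eq:a_wk} by combining three facts already established above: the Eisenstein-space identity \eqref{eq:DEw}, which expresses $\overline{f_w^{(k)}}$ as a scalar multiple of an iterated derivative of a single Eisenstein series; the explicit $q$-expansion \eqref{eq:D-eisenstein} of $D^kE_w$; and the Deligne-type bound of Proposition~\ref{prop1} for the Fourier coefficients of the cuspidal part.

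First I would dispose of $k=0$: here $f_w^{(0)}=E_w$ by \eqref{eq:f_wk}, so \eqref{eq:eisenstein-fourier} gives $a_w^{(0)}(n)=-\frac{2w}{B_w}\sigma_{w-1}(n)$ exactly, with no error term, which is the first line of \eqref{eq:a_wk}. For $k>0$ (and, say, $w\geq 2k+4$, so that \eqref{eq:DEw} applies and no term is omitted in \eqref{eq:f_wk}) I would substitute $w\mapsto w-2k$ in \eqref{eq:DEw} and rearrange: this yields, in $\QE_w^k$,
\begin{equation*}
  \overline{f_w^{(k)}}=(-1)^k\frac{12^k}{(w-2k)_k}\,\overline{D^kE_{w-2k}},
\end{equation*}
and since $\QS_w^k$ is a vector space this is the same as saying
\begin{equation*}
  f_w^{(k)}=(-1)^k\frac{12^k}{(w-2k)_k}\,D^kE_{w-2k}+g
\end{equation*}
for some $g\in\QS_w^k$. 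Then I would read off the $n$-th Fourier coefficient: by \eqref{eq:D-eisenstein} the first summand contributes $(-1)^k\frac{12^k}{(w-2k)_k}\bigl(-\frac{2(w-2k)}{B_{w-2k}}\bigr)n^k\sigma_{w-2k-1}(n)$, and the identity $(w-2k)_k=(w-2k)\,(w-2k+1)_{k-1}$ cancels the factor $w-2k$ and produces exactly the main term of \eqref{eq:a_wk}. Since $g\in\QS_w^k$ has weight $w$, Proposition~\ref{prop1} bounds its coefficients by $\BigOh\bigl(n^{(w-1)/2}\sigma_0(n)\bigr)$, giving the error term.

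I expect the only delicate points to be bookkeeping rather than anything conceptual. One has to check that the product of the constant in \eqref{eq:DEw} and the one in \eqref{eq:D-eisenstein}, after the Pochhammer cancellation, is really $(-1)^{k+1}\frac{2\cdot12^k}{(w-2k+1)_{k-1}B_{w-2k}}$, and one must be careful that it is the weight $w$ (not $w-2k$) that governs $\QS_w^k$, so that Proposition~\ref{prop1} supplies precisely the claimed $O$-bound. Finally, for the finitely many small weights $w<2k+4$ the identity \eqref{eq:DEw} is not directly available --- there one of the factors $E_{w-2\ell}$ with $w-2\ell\in\{0,2\}$ occurs in or is dropped from \eqref{eq:f_wk} --- and these cases would have to be handled individually, for instance by expanding the relevant $f_w^{(k)}$ via \eqref{eq:D-serre} or by direct computation; this, if anything, is the part I would expect to cost the most effort.
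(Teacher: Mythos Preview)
Your proposal is correct and follows essentially the same route as the paper: handle $k=0$ by the Fourier expansion of $E_w$, and for $k>0$ invert \eqref{eq:DEw} to write $f_w^{(k)}=(-1)^k\frac{12^k}{(w-2k)_k}D^kE_{w-2k}+h$ with $h\in\QS_w^k$, then combine \eqref{eq:D-eisenstein} with Proposition~\ref{prop1}. The paper's proof does not treat the borderline weights $w<2k+4$ separately either, so your extra caution there goes beyond what the paper does.
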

\begin{proof}
  The case $k=0$ is just the Fourier expansion of the Eisenstein series given
  in \eqref{eq:eisenstein-2k}. For $k>0$ we use \eqref{eq:DEw} to obtain
  \begin{equation*}
    f_w^{(k)}=(-1)^k\frac{12^k}{(w-2k)_k}D^kE_{w-2k}+h_{w}^{(k)},
  \end{equation*}
  where $h_{w}^{(k)}\in\QS_{w}^{(k)}$. The Fourier coefficient of the
  first term equals
  \begin{equation*}
    (-1)^{k+1}\frac{12^k}{(w-2k)_k}\frac{2(w-2k)}{B_{w-2k}}n^k\sigma_{w-2k-1}(n)
  \end{equation*}
  using \eqref{eq:D-eisenstein}.  The Fourier coefficient of $h_{w}^{(k)}$ is
  estimated using Proposition~\ref{prop1} to obtain \eqref{eq:a_wk}.
\end{proof}

We recall the dimension formulas for the spaces $\QM_w^r$ for
$1\leq r\leq4$:
\begin{equation}
  \label{eq:dimqm}
  \begin{split}
  \dim\QM_w^1&=\left\lfloor\frac w{6}\right\rfloor+1\\
  \dim\QM_w^2&=\left\lfloor\frac w{4}\right\rfloor+1\\
  \dim\QM_w^3&=\left\lfloor\frac w{3}\right\rfloor+1\\
  \dim\QM_w^4&=
  \begin{cases}
    \left\lfloor\frac {5w}{12}\right\rfloor&\text{if }w\equiv10\pmod{12}\\
    \left\lfloor\frac {5w}{12}\right\rfloor+1&\text{otherwise;}
  \end{cases}
  \end{split}
\end{equation}
see, for instance \cite[Proposition~2.1]{Grabner2020:quasimodular_forms}.
\section{Extremal quasimodular forms}
\label{sec:extr-quas-forms}
The notion of an \emph{extremal quasimodular form} was introduced in
\cite{Kaneko_Koike2006:extremal_quasimodular_forms}. They are defined as
quasimodular forms achieving the maximal possible order of vanishing at
$z=i\infty$ for given weight $w$ and depth $r$. It follows from a simple
dimension argument that the order of vanishing $\dim\QM_w^r-1$ can be
achieved. It was shown in
\cite[Theorem~1.3]{Pellarin_Nebe2020:extremal_quasi_modular} and independently
in \cite[Remark~4.7]{Grabner2020:quasimodular_forms} that for
$r\leq4$ this is actually the precise order of vanishing for such forms.

In \cite{Kaneko_Koike2006:extremal_quasimodular_forms} differential equations
satisfied by extremal quasimodular forms are found for $r=1$ and
$r=2$. Furthermore, two conjectures about these forms for $r\leq4$ are stated:
\begin{itemize}
\item if the first non-zero Fourier coefficient of the extremal quasimodular
  form equals $1$ (the form is called \emph{normalised} then), the denominators
  of all Fourier coefficients are then divisible only by primes less than the
  weight.
\item if the first non-zero Fourier coefficient of the extremal quasimodular
  form is positive, then all Fourier coefficients are positive.
\end{itemize}
The first conjecture has been proved for $r=1$ in
\cite{Pellarin_Nebe2020:extremal_quasi_modular} and
\cite{Mono2020:conjecture_kaneko_koike}. It has been proved in full generality
for $1\leq r\leq4$ in \cite{Grabner2020:quasimodular_forms}.

In the course of the following section we will prove the following theorem,
which partially settles the second conjecture.
\begin{theorem}\label{thm1}
  Let $g_w^{(r)}$ be a normalised extremal quasimodular form of weight $w$ and
  depth $r\leq4$. Then all but possibly finitely many Fourier coefficients are
  positive.
\end{theorem}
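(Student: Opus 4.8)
The plan is to split $g:=g_w^{(r)}$ into its Eisenstein and cusp parts, to observe that — for each fixed $w$ — the Eisenstein part governs the growth of the Fourier coefficients, and then to pin down the sign of its leading term. By \eqref{eq:QM-decomp} write
\begin{equation*}
  g=C+\sum_{\ell=0}^{r}c_\ell\,D^\ell E_{w-2\ell},\qquad C\in\QS_w^r,\quad c_\ell\in\C .
\end{equation*}
For $w$ large enough the order of vanishing $\dim\QM_w^r-1$ is positive, so comparing constant terms gives $c_0=0$. By \eqref{eq:D-eisenstein} the $n$-th Fourier coefficient of $D^\ell E_{w-2\ell}$ equals $-\frac{2(w-2\ell)}{B_{w-2\ell}}\,n^\ell\sigma_{w-2\ell-1}(n)$, which, via $n^{w-2\ell-1}\le\sigma_{w-2\ell-1}(n)\le\zeta(w-2\ell-1)n^{w-2\ell-1}$, is of exact order $n^{w-\ell-1}$; by Proposition~\ref{prop1} the coefficients of $C$ are $\BigOh\bigl(n^{(w-1)/2}\sigma_0(n)\bigr)$. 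Hence, putting $\ell_0=\min\{\ell\ge1:c_\ell\neq0\}$ (the Eisenstein part of $g$ is nonzero; cf.\ the discussion below), for each fixed $w>2r+2$ the term $\ell=\ell_0$ strictly dominates all other Eisenstein terms (which are of order $n^{w-\ell-1}$ with $\ell>\ell_0$) and the cusp part (since $(w-1)/2<w-\ell_0-1$), so
\begin{equation*}
  a_w^{(r)}(n)=c_{\ell_0}\left(-\frac{2(w-2\ell_0)}{B_{w-2\ell_0}}\right)n^{\ell_0}\sigma_{w-2\ell_0-1}(n)\bigl(1+o(1)\bigr)\qquad(n\to\infty).
\end{equation*}
Consequently $a_w^{(r)}(n)>0$ for all sufficiently large $n$ if and only if $c_{\ell_0}\cdot\bigl(-\tfrac{2(w-2\ell_0)}{B_{w-2\ell_0}}\bigr)>0$. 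One may phrase this equally well through the forms $f_w^{(k)}$, using \eqref{eq:DEw} to express the Eisenstein part of $g$ as a combination of the $\overline{f_w^{(\ell)}}$ and then invoking Proposition~\ref{prop2}.

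It remains to determine the sign of $c_{\ell_0}$ relative to that of $B_{w-2\ell_0}$, which is $(-1)^{(w-2\ell_0)/2+1}$. This is the heart of the matter, and it is here that $r\le4$ enters. The plan is to identify the Eisenstein class $\overline{g}\in\QEz_w^r$ — equivalently the vector $(c_1,\dots,c_r)$ up to the normalizing scalar — explicitly, using what is known about extremal quasimodular forms of depth $\le4$ (\cite{Kaneko_Koike2006:extremal_quasimodular_forms,Grabner2020:quasimodular_forms}): the Kaneko--Koike differential equations for $r\le 2$, and, in general, the behaviour of the Eisenstein part under the weight-raising operators, which is encoded by \eqref{eq:serre-Ew}, \eqref{eq:Serre-fk} and the expansions \eqref{eq:D-serre} (note in particular $\partial_{w-r}\overline{f_w^{(r)}}=-\tfrac{w-r}{12}\overline{f_{w+2}^{(r)}}$). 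Carrying this out should express $c_{\ell_0}$ as an explicit rational function of $w$ whose sign one reads off after a finite case analysis in $r\in\{1,2,3,4\}$ and a residue class of $w$, the small weights — where the asymptotics have not yet taken hold — being treated directly. This step also forces the Eisenstein part to be nonzero, which must hold in any case: were $g$ a cusp form, its (real) coefficients would change sign infinitely often, contradicting the theorem.

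The main obstacle is exactly this sign computation. The reduction in the first paragraph is soft, using only Deligne's bound (via Proposition~\ref{prop1}) and elementary estimates for $\sigma_\alpha(n)$; determining $\ell_0$ and $\mathrm{sgn}\,c_{\ell_0}$, by contrast, requires genuine structural input on the extremal forms, and it is to be expected that the argument is confined to $r\le4$ because that is precisely the range in which the relevant differential equations are available and the order of vanishing is known to equal $\dim\QM_w^r-1$. A secondary, more routine task — not needed for the bare theorem, but required for the explicit verification for $w\le200$ announced in the introduction — is to make the threshold ``$n$ large'' effective in $w$, which is where the coefficient bound of \cite{Jenkins_Rouse2011:bounds_coefficients_cusp} is used.
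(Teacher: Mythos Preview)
Your plan is exactly the paper's approach, and your identification of the sign computation as the crux is accurate. The paper carries out that deferred step by treating each $r\in\{1,2,3,4\}$ separately: the weight-raising recursions of \cite[Propositions~6.1--6.4]{Grabner2020:quasimodular_forms} are pushed down to $\QEz_w^r$ via \eqref{eq:Serre-fk} and \eqref{eq:f_w+l}, producing lower-triangular matrix recurrences for the coordinates of $\overline{g_w^{(r)}}$ in the basis $\{\overline{f_w^{(\ell)}}\}_{\ell=1}^r$; in every case one finds $\ell_0=1$, and the sign of the $\overline{f_w^{(1)}}$-coefficient (hence of $c_1$, via \eqref{eq:DEw}) is read off from the diagonal entries of these recurrences together with the initial values.
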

\section{Proof of Theorem~\ref{thm1}}
The proof of Theorem~\ref{thm1} will be done separately for the values of the
depth parameter $r$. We will make frequent use of recursive relations for
extremal quasimodular forms derived in
\cite{Grabner2020:quasimodular_forms}. Notice that by definition an extremal
quasimodular form is in $\QEz_w^{(r)}\oplus\QS_w^{(r)}$.

The proofs follow the general scheme
\begin{itemize}
\item express the form $g_w^{(r)}$ in terms of a linear recurrence obtained in
  \cite{Grabner2020:quasimodular_forms}
\item use this recurrence to obtain a linear recurrence for the coefficients of
  $\overline{f_w^{(\ell)}}$ $(\ell=1,\ldots,r)$ in the decomposition of
  $\overline{g_w^{(r)}}$
\item rewrite this decomposition in terms of $\overline{D^\ell E_{w-2\ell}}$
  and observe the positivity of the asymptotic main term originating from
  $DE_{w-2}$.
\end{itemize}
The recursions obtained in \cite[Section~6]{Grabner2020:quasimodular_forms}
contain a positive factor, which ensures that the forms are normalised, which
is important in the context there. In this section we use these recursions
without this factor and at some occasions change this factor, which does not
affect the sign of the coefficients.

\subsection{Depth $1$}\label{sec:depth-1}
Using \cite[Proposition~6.1]{Grabner2020:quasimodular_forms} we define a
sequence of quasimodular forms by
\begin{equation}\label{eq:recurr1}
  \begin{split}
    g_6^{(1)}&=E_2E_4-E_6=-f_6^{(1)}=3DE_4\\
    g_{w+6}^{(1)}&=E_4\partial_{w-1}g_w^{(1)}-\frac{w+1}{12}E_6g_w^{(1)}\\
    g_{w+2}^{(1)}&=\frac{12}{w-1}\partial_{w-1}g_w^{(1)}\\
    g_{w+4}^{(1)}&=E_4g_w^{(1)}
  \end{split}
\end{equation}
for $w\equiv0\pmod6$. These forms are then extremal quasimodular forms of
weight $w$ and depth $1$ with positive coefficient of the first non vanishing
term of its Fourier expansion. 

By the fact that $\QEz_w^1$ is one dimensional we set
\begin{equation*}
  \overline{g_w^{(1)}}=C_w\overline{f_w^{(1)}}.
\end{equation*}
Inserting this into \eqref{eq:recurr1} and using \eqref{eq:Serre-fk} and
\eqref{eq:f_w+l} gives
\begin{align*}
  \overline{g_{w+6}^{(1)}}&=C_w\left(\overline{E_4\partial_{w-1}f_w^{(1)}}
    -\frac{w+1}{12}\overline{E_6f_w^{(1)}}\right)\\
  &=  -C_w\left(\frac{w-1}{12}\overline{E_4f_{w+2}^{(1)}}+
  \frac{w+1}{12}\overline{E_6f_w^{(1)}}\right)=
  -\frac{w}{6}C_w\overline{f_{w+6}^{(1)}},
\end{align*}
from which we derive
\begin{equation*}
  C_{w}=(-1)^{w/6}\left(\frac w6-1\right)!.
\end{equation*}
Together with \eqref{eq:DEw} this gives
\begin{equation*}
  \overline{g_{6k}^{(1)}}=(-1)^{k-1}\frac{6(k-1)!}{3k-1}\overline{DE_{6k-2}}.
\end{equation*}
Applying the third equation in \eqref{eq:recurr1} we obtain
\begin{equation*}
  \overline{g_{6k+2}^{(1)}}=\frac{12}{6k-1}\partial_{6k-1}\overline{g_{6k}^{(1)}}
  =(-1)^k\frac{2(k-1)!}{k}\overline{DE_{6k}}
\end{equation*}
and
\begin{equation*}
  \overline{g_{6k+4}^{(1)}}=\overline{E_4g_{6k}^{(1)}}=
  (-1)^{k-1}\frac{6(k-1)!}{3k+1}\overline{DE_{6k+2}},
\end{equation*}
which gives
\begin{equation}\label{eq:g_w-final}
  \overline{g_w^{(1)}}=(-1)^{\frac w2-1}
  \frac{12\left(\lfloor\frac w6\rfloor-1\right)!}{w-2}\overline{DE_{w-2}}.
\end{equation}

The Fourier coefficients of $g_{w}^{(1)}$ are then given by
\begin{equation*}
  \frac{24\left(\lfloor\frac w6\rfloor-1\right)!}
  {|B_{w-2}|}n\sigma_{w-3}(n)+
  \mathcal{O}\left(n^{\frac{w-1}2}\sigma_0(n)\right).
\end{equation*}
Notice that the first term is of order $n^{w-2}$. Thus we have proved
Theorem~\ref{thm1} for $r=1$.
\subsection{Depth $2$}\label{sec:depth-2}
Using \cite[Proposition~6.2]{Grabner2020:quasimodular_forms} we define a
sequence of quasimodular forms by
\begin{equation}
  \label{eq:gw-2}
  \begin{split}
    g_4^{(2)}&=E_4-E_2^2=-12DE_2=2f_4^{(1)}-f_4^{(2)}\\
    g_{w+4}^{(2)}&=w(w+1)E_4g_w^{(2)}-36\partial_{w-2}^2g_w^{(2)}\\
    g_{w+2}^{(2)}&=\frac{12}{w-2}\partial_{w-2}g_w^{(2)}
  \end{split}
\end{equation}
for $w\equiv0\pmod4$. The form $g_w^{(2)}$ is then an extremal quasimodular
form of weight $w$ and depth $2$ with positive coefficient of the first non
vanishing term in its Fourier expansion.

We make the ansatz
\begin{equation*}
  \overline{g_{4k}^{(2)}}
  =a_{4k}\overline{f_{4k}^{(1)}}+b_{4k}\overline{f_{4k}^{(2)}}
\end{equation*}
with $a_4=2$ and $b_4=-1$. Applying  \eqref{eq:Serre-fk} twice gives
\begin{align*}
  \overline{\partial_{4k-2}^2f_{4k}^{(1)}}&
  =\frac{2k(2k-1)}{36}\overline{f_{4k+4}^{(1)}}+
  \frac{4k-1}{72}\overline{f_{4k+4}^{(2)}}\\
  \overline{\partial_{4k-2}^2f_{4k}^{(2)}}&
  =\frac{2k(2k-1)}{36}\overline{f_{4k+4}^{(2)}}.
\end{align*}
Inserting this into the recurrence \eqref{eq:gw-2} then gives
\begin{equation}\label{eq:matrix-recurr2}
  \begin{pmatrix}
    a_{4k+4}\\b_{4k+4}
  \end{pmatrix}=
    \begin{pmatrix}
      6k(2k+1)&0\\
      -\frac{4k-1}2&6k(2k+1)
    \end{pmatrix}
    \begin{pmatrix}
    a_{4k}\\b_{4k}
  \end{pmatrix}.
\end{equation}
This recurrence has the solutions
\begin{align*}
  a_{4k}&=2\cdot3^{k-1}(2k-1)!\\
  b_{4k}&=-3^k(2k-1)!\left(\frac13+
    \sum_{\ell=1}^{k-1}\frac{4\ell-1}{18\ell(2\ell+1)}\right),
\end{align*}
which can be seen from
\begin{equation*}
  a_{4k+4}=6k(2k+1)a_{4k}=3(2k+1)2k\cdot 2\cdot 3^{k-1}(2k-1)!=2\cdot3^k(2k+1)!
\end{equation*}
and
\begin{align*}
  b_{4k+4}&=6k(2k+1)b_{4k}-\frac{4k-1}{2}a_{4k}\\
  &=  -3^{k+1}(2k+1)!\left(\frac13+
    \sum_{\ell=1}^{k-1}\frac{4\ell-1}{18\ell(2\ell+1)}\right)-
  (4k-1)3^{k-1}(2k-1)!\\
  &= -3^{k+1}(2k+1)!\left(\frac13+
    \sum_{\ell=1}^{k-1}\frac{4\ell-1}{18\ell(2\ell+1)}
    +\frac{4k-1}{18k(2k+1)}\right).
\end{align*}
Similarly we obtain
\begin{align*}
  a_{4k+2}&=-2\cdot3^{k-1}(2k-1)!\\
  b_{4k+2}&=3^k(2k-1)!\left(\frac13+
    \sum_{\ell=1}^{k-1}\frac{4\ell-1}{18\ell(2\ell+1)}-\frac1{3(2k-1)}\right).
\end{align*}
Thus we have
\begin{equation*}
  \overline{g_w^{(2)}}=-\frac{12}{w-2}a_w\overline{DE_{w-2}}
  +\frac{144}{(w-4)(w-3)}b_w\overline{D^2E_{w-4}},
\end{equation*}
where we have used \eqref{eq:DEw} to rewrite $\overline{f_w^{(k)}}$ ($k=1,2$)
in terms of $\overline{DE_{w-2}}$ and $\overline{D^2E_{w-4}}$. Observing that
the sign of $-\frac{a_w}{B_{w-2}}$ is always positive, whereas the sign of
$\frac{b_w}{B_{w-4}}$ is always negative, we derive the $n$-th Fourier
coefficient of $g_w^{(2)}$ using \eqref{eq:DEw} and \eqref{eq:D-eisenstein}
\begin{equation*}
  \frac{24|a_w|}{|B_{w-2}|}n\sigma_{w-3}(n)
  -\frac{288|b_w|}{(w-3)|B_{w-4}|}n^2\sigma_{w-5}(n)
  +\BigOh(n^{\frac{w-1}2}\sigma_0(n)).
\end{equation*}
Notice that the first term is asymptotically dominating and positive, whereas
the second term is negative. This proves Theorem~\ref{thm1} for $r=2$.
\subsection{Depth $3$}
Using \cite[Proposition~6.3]{Grabner2020:quasimodular_forms} we define a
sequence of quasimodular forms by
\begin{equation}
  \label{eq:gw-3}
  \begin{split}
    g_6^{(3)}&=5E_2^3-3E_2E_4-2E_6=
    -12f_6^{(1)}+15f_6^{(2)}-5f_6^{(3)}\\
    g_{w+6}^{(3)}&=48(7w^2+42w+60)\partial_{w-3}^3g_w^{(3)}\\
    &- (15 w^4+96 w^3+ 151 w^2-30
    w-116)E_4\partial_{w-3}g_w^{(3)} \\
    &-\frac16(w+1)(9w^4+45w^3+40w^2+24w+144)E_6g_w^{(3)}\\
    g_{w+2}^{(3)}&=\partial_{w-3}g_w^{(3)}\\
    g_{w+4}^{(3)}&=(w+1)(3w+1)E_4g_w^{(3)}-48\partial_{w-3}^2g_w^{(3)}
  \end{split}
\end{equation}
for $w\equiv0\pmod6$. These forms are then extremal quasimodular forms of
weight $w$ and depth $3$ with positive coefficient of the first non vanishing
term of its Fourier expansion.

We make the ansatz
\begin{equation*}
  \overline{g_{w}^{(3)}}=
  a_{w}\overline{f_{w}^{(1)}}+b_{w}\overline{f_{w}^{(2)}}
  +c_{w}\overline{f_{w}^{(3)}}
\end{equation*}
with $a_6=-12$, $b_6=15$, and $c_6=-5$ and first consider the case $w=6k$.
Applying \eqref{eq:Serre-fk} thrice gives
\begin{align*}
  \overline{\partial_{6k-3}^3f_{6k}^{(1)}}&=
  -\frac{(6k+1)(6k-1)(2k-1)}{576}\overline{f_{6k+6}^{(1)}}
  -\frac{108k^2-36k-1}{864}\overline{f_{6k+6}^{(2)}}\\
  &\quad-  \frac{6k-1}{288}\overline{f_{6k+6}^{(3)}}\\
  \overline{\partial_{6k-3}^3f_{6k}^{(2)}}&=
  -\frac{(6k+1)(6k-1)(2k-1)}{576}\overline{f_{6k+6}^{(2)}}
  -\frac{108k^2-36k-1}{1728}\overline{f_{6k+6}^{(3)}}\\
  \overline{\partial_{6k-3}^3f_{6k}^{(3)}}&=
  -\frac{(6k+1)(6k-1)(2k-1)}{576}\overline{f_{6k+6}^{(3)}}.
\end{align*}

Then a computation similar to the one which gave \eqref{eq:matrix-recurr2}
gives the recurrence
\begin{multline*}
  \begin{pmatrix}
    a_{6k+6}\\b_{6k+6}\\c_{6k+6}
  \end{pmatrix}\\=
\begin{pmatrix}
 \scriptscriptstyle-96 k(2k+1)^2 (3k+1)(3k+2) & 0 & 0 \\
 \scriptscriptstyle8 (2k+1) \left(108 k^3+99 k^2+17 k-2\right) &
 \scriptscriptstyle-96 k(2k+1)^2 (3k+1)(3k+2) & 0 \\
 \scriptscriptstyle-(6k-1)(42k^2+42k+10) &
 \scriptscriptstyle4 (2k+1) \left(108 k^3+99 k^2+17 k-2\right) &
 \scriptscriptstyle-96 k(2k+1)^2 (3k+1)(3k+2) \\
\end{pmatrix}
\begin{pmatrix}
  a_{6k}\\b_{6k}\\c_{6k}
\end{pmatrix}.
\end{multline*}
Notice that this recurrence implies that $(-1)^ka_{6k}$, $(-1)^{k-1}b_{6k}$,
and $(-1)^kc_{6k}$ are positive for all $k\geq1$.
Applying the third equation in \eqref{eq:gw-3} and using \eqref{eq:Serre-fk}
gives
\begin{multline*}
  \overline{g_{6k+2}^{(3)}}=
  -\frac{2k-1}4a_{6k}\overline{f_{6k+2}^{(1)}}-
  \left(\frac{2k-1}4b_{6k}+\frac16a_{6k}\right)\overline{f_{6k+2}^{(2)}}\\
  -\left(\frac{2k-1}4c_{6k}+\frac1{12}b_{6k}\right)\overline{f_{6k+2}^{(3)}};
\end{multline*}
similarly, the fourth equation in \eqref{eq:gw-3} gives
\begin{multline*}
  \overline{g_{6k+4}^{(3)}}=
  \frac{1}{48} \left(5172 k^2+1160 k+47\right)a_{6k}\overline{f_{6k+4}^{(1)}}\\
  +\left(\frac{1}{48} \left(5172 k^2+1160 k+47\right)b_{6k}
    -\frac1{36}a_{6k}\right)\overline{f_{6k+4}^{(2)}}\\+
\left(\frac{1}{48} \left(5172 k^2+1160 k+47\right)c_{6k}
    -\frac1{144}b_{6k}\right)\overline{f_{6k+4}^{(3)}}.
\end{multline*}

Together with \eqref{eq:DEw} and \eqref{eq:D-eisenstein} this gives
\begin{multline*}
  a_w\frac{24}{B_{w-2}}n\sigma_{w-3}(n)
    +b_w\frac{288}{(w-3)B_{w-4}}n^2\sigma_{w-5}(n)\\
    +c_w\frac{3456}{(w-4)(w-5)B_{w-6}}n^3\sigma_{w-7}(n)
  +\BigOh(n^{\frac{w-1}2}\sigma_0(n)).
\end{multline*}
The first term is positive by our discussion of the sign of $a_{6k}$ and the
signs of the Bernoulli numbers. It is of order $n^{w-2}$ and thus dominates the
other terms. This implies the assertion of Theorem~\ref{thm1} for $r=3$.
\subsection{Depth $4$}\label{sec:depth-4}
Using \cite[Proposition~6.4]{Grabner2020:quasimodular_forms} we define a
sequence of quasimodular forms by
\begin{align*}
    g_{12}^{(4)}&=13025 E_4^3-12796 E_6^2+
    3852 E_2E_4E_6-2706 E_2^2 E_4^2\\
    &+27500 E_2^3 E_6-
      28875 E_2^4 E_4\\&=34560f_{12}^{(1)}-93456f_{12}^{(2)}+88000f_{12}^{(3)}
      -28875f_{12}^{(4)}-\frac{15377966208}{691}\Delta\\
      g_{w+12}^{(4)}&=-p_0(w)E_4\partial_{w-4}^4g_w^{(4)}+
    \frac{(w+4)^4}{12}p_1(w)E_6\partial_{w-4}^3g_w^{(4)}\\
    &+\frac1{720}p_2(w) E_4^2\partial_{w-4}^2g_w^{(4)}
    +\frac1{8640}p_3(w)
    E_4E_6\partial_{w-4}g_w^{(4)}\\
    &+\left(\frac{w+1}{25920}p_4(w)E_4^3+
      \frac{(w+1)(w+4)^4}{15}p_5(w)\Delta\right)g_w^{(4)}
\end{align*}
\begin{align*}
    g_{w+2}^{(4)}&=\partial_{w-4}g_w^{(4)}\\
    g_{w+4}^{(4)}&=(w+1)(2w+1)E_4g_w^{(4)}-18\partial_{w-4}^2g_w^{(4)}\\
    g_{w+6}^{(4)}&=\scriptstyle\left(17 w^2+78
      w+90\right)\partial_{w-4}^3g_w^{(4)}
    -    \frac1{144}\left(191 w^4+1008 w^3+1504 w^2+192
      w-576\right)E_4\partial_{w-4}g_w^{(4)}\\
    &\scriptstyle-\frac1{432}(w+1) \left(81 w^4+376 w^3+560 w^2+528
      w+576\right)E_6g_w^{(4)}
\end{align*}
\begin{align*}
    g_{w+8}^{(4)}&=\scriptstyle-\left(1313 w^6+28678 w^5+255122 w^4+1183008 w^3
      +3016512
      w^2+ 4012416 w+2177280\right)\partial_{w-4}^4g_w^{(4)}\\
    &\scriptstyle
    +\frac1{144}\bigl(13423 w^8+295800 w^7+2645368 w^6+12166080 w^5+29311504
      w^4+29020416
      w^3-15653376 w^2\\
      &\quad\scriptstyle-56692224 w-33094656\bigr)E_4\partial_{w-4}^2g_w^{(4)}\\
    &\scriptstyle+\frac1{432}\bigl(6561 w^9+136994 w^8+1139536
    w^7+4759344 w^6+10294016 w^5+11541472 w^4+14671104 w^3
    \\&\quad\scriptstyle+41398272
      w^2+63016704
      w+31974912\bigr)E_6\partial_{w-4}g_w^{(4)}\\
    &\scriptstyle+\frac1{2592}(w+1) \bigl(2048 w^9+38685 w^8+287792
    w^7+1130616 w^6+3110288 w^5+8497968 w^4\\
    &\quad\scriptstyle+18484992 w^3+14141952w^2-20570112 w-30855168\bigr)
    E_4^2g_w^{(4)}\\
    g_{w+10}&=\scriptstyle\left(293 w^4+4332 w^3+22968 w^2+51192 w+40824\right)E_4
    \partial_{w-4}^3g_w^{(4)}\\
    &\scriptstyle-\frac43\left(w^5+15 w^4+90 w^3+270 w^2+405 w+243\right)
    E_6\partial_{w-4}^2g_w^{(4)}\\
    &\scriptstyle-\frac1{144}\left(3311 w^6+51234 w^5+291550 w^4+731040
      w^3+717696 w^2-2592 w-256608\right)E_4^2\partial_{w-4}g_w^{(4)}\\
    &\scriptstyle-\frac1{432}(w+1) \left(1313 w^6+19430 w^5+104354
      w^4+251616 w^3+310464 w^2+300672 w+248832\right)E_4E_6g_w^{(4)}
\end{align*}
for $w\equiv0\pmod{12}$. These forms are then extremal quasimodular forms of
weight $w$ and depth $4$ with positive coefficient of the first non vanishing
term of its Fourier expansion. The polynomials $p_0,\ldots,p_5$ are given by
\begin{align*}
    p_0(w)&=\scriptstyle 53567 w^{14}+4499628 w^{13}+173318340
    w^{12}+4055616864 w^{11}+
    64374205218 w^{10}\notag\\
    &\scriptstyle+732790207224 w^9+6165100658404 w^8+38914973459904 w^7+
    185044363180416 w^6\notag\\
    &\scriptstyle
    +659055640624128 w^5+1729058937394176 w^4+
    3237068849283072 w^3\notag\\
    &\scriptstyle
    +4084118362128384 w^2+3105388005949440 w+1072718335180800\notag\\
\end{align*}
\begin{align*}
    p_1(w)&=\scriptstyle21257 w^{11}+1465884
      w^{10}+45186990 w^9+821051740 w^8+9759703548 w^7\notag\\
    &\scriptstyle +
      79588527156 w^6
      +453687847200 w^5
      +1804779218520 w^4+4900200364800 w^3\notag\\
    &\scriptstyle+
      8628400143360 w^2
      +8845395333120 w+3990767616000\notag\\
\end{align*}
\begin{align*}
p_2(w)&=\scriptstyle2662740 w^{16}+224120550
    w^{15}+8648003840 w^{14}+202621853220 w^{13}\notag\\
    &\scriptstyle+
  3217542322665 w^{12}
  +36586266504480 w^{11}+306658234963680 w^{10}+\notag\\
    &\scriptstyle
  1919356528986240 w^9
  +8970889439482816 w^8+30866477857195008 w^7\notag\\
    &\scriptstyle+
  75319919247624192 w^6
  +118664936756305920 w^5+83296021547483136 w^4\notag\\
    &\scriptstyle
  -82769401579438080 w^3
  -258790551639293952 w^2-245119018746249216 w \notag\\
    &\scriptstyle -86822757140004864\notag\\
\end{align*}
\begin{align*}
p_3(w)&=\scriptstyle4272785 w^{17}+351970350
    w^{16}+13234823080 w^{15}+300533087760 w^{14}\notag\\
    &\scriptstyle+4592608729932 w^{13}
    +49787752253076 w^{12}+392868254956864 w^{11}\notag\\
    &\scriptstyle  +2274866661846720 w^{10}+9597118952486912 w^9
    +28789901067644544 w^8 \notag\\
    &\scriptstyle +58741997991303168 w^7
  +79017091035181056 w^6
  +100071999240486912w^5\notag\\
    &\scriptstyle+278562611915587584 w^4+779359222970449920 w^3
  +1260737947219525632 w^2\notag\\
    &\scriptstyle+1054463073573666816 w+355736061701259264\notag\\
\end{align*}
\begin{align*}
p_4(w)&=\scriptstyle517135 w^{17}+40772970 w^{16}
  +1455719580 w^{15}+31076826800 w^{14}+441034824168 w^{13}\notag\\
    &\scriptstyle
  +4375275488634 w^{12}+31084796008256 w^{11}  +160090786631040 w^{10}+608772267089664 w^9\notag\\
    &\scriptstyle
  +1834128793979392 w^8 +5229385586024448 w^7+15775977503047680 w^6\notag\\
    &\scriptstyle
+40287913631023104  w^5+57115900062203904 w^4-19258645489385472 w^3\notag\\
    &\scriptstyle
  -224285038806564864 w^2  -343616934723452928 w-182090547421249536\notag\\
\end{align*}
\begin{align*}
p_5(w)&=\scriptstyle531441 w^{13}+36690686
    w^{12}+1133566168 w^{11}+20680195920 w^{10}+247548700336 w^9\notag\\
    &\scriptstyle
    +2043291298652 w^8+11897624359104 w^7+49185666453888 w^6
    +143692776009216 w^5\notag\\
   &\scriptstyle
   +293687697411072 w^4+418695721574400 w^3+426532499288064 w^2\notag\\
   &\scriptstyle
+316421756411904 w+135523565862912.\notag
\end{align*}

As before we make the ansatz
\begin{equation*}
  \overline{g_{w}^{(4)}}=a_{w}\overline{f_{w}^{(1)}}
  +b_{w}\overline{f_{w}^{(2)}}+c_{w}\overline{f_w^{(3)}}
  +d_w\overline{f_w^{(4)}},
\end{equation*}
which gives a recurrence
\begin{equation*}
  \begin{pmatrix}
    a_{12(k+1)}\\b_{12(k+1)}\\c_{12(k+1)}\\d_{12(k+1)}\\
  \end{pmatrix}=
  \begin{pmatrix}
    \lambda_k&0&0&0\\
    -*&\lambda_k&0&0\\
    +*&-*&\lambda_k&0\\
    -*&+*&-*&\lambda_k
  \end{pmatrix}
  \begin{pmatrix}
    a_{12k}\\b_{12k}\\c_{12k}\\d_{12k}\\
  \end{pmatrix},
\end{equation*}
where $\lambda_k$ is a polynomial of degree $18$, which factors into rational
linear factors and $\pm*$ denotes positive/negative entries. This together with
the signs of the initial values $a_{12}=34560$, $b_{12}=-93456$,
$c_{12}=88000$, and $d_{12}=-28875$ shows that $a_{12k}$, $-b_{12k}$,
$c_{12k}$, and $-d_{12k}$ are all positive. From this it follows that
$(-1)^{\lfloor\frac w2\rfloor}a_w$ is positive. Finally, this gives the
asymptotic formula
\begin{multline*}
  \frac{24a_w}{B_{w-2}}n\sigma_{w-3}(n)
  +\frac{288b_w}{(w-3)B_{w-4}}n^2\sigma_{w-5}(n)\\+
  \frac{3456c_w}{(w-4)(w-5)B_{w-6}}n^3\sigma_{w-7}(n)\\
  -\frac{41472d_w}{(w-5)(w-6)(w-7)B_{w-8}}n^4\sigma_{w-9}(n)+
  \BigOh\left(n^{\frac{w-1}2}\sigma_0(n)\right)
\end{multline*}
for the Fourier coefficients of $g_w^{(4)}$, where we have used \eqref{eq:DEw}
and \eqref{eq:D-eisenstein} for the explicit expression of the terms coming
from $f_w^{(k)}$ ($k=1,\ldots,4$). The first term asymptotically dominates and
is positive by our discussion of the sign of $a_w$ and the sign of the
Bernoulli number. This implies the theorem for $r=4$.
\section{Numerical experiments}\label{sec:numer-exper}
In \cite{Jenkins_Rouse2011:bounds_coefficients_cusp} an explicit bound for the
Fourier coefficients of cusp forms has been derived.
\begin{theorem}[Theorem~1 in \cite{Jenkins_Rouse2011:bounds_coefficients_cusp}]
  \label{thm-jenkins-rouse}
  Let
  \begin{equation*}
    G(z)=\sum_{n=1}^\infty g(n)q^n
  \end{equation*}
  be a cusp form of weight $w$. Then
  \begin{equation}
    \label{eq:jenkins-rouse}
    \begin{split}
      |g(n)|&\leq\sqrt{\log w}\Biggl(
        11\sqrt{\sum_{m=1}^\ell\frac{|g(m)|^2}{m^{w-1}}}\\
      &+
        \frac{e^{18.72}(41.41)^{w/2}}{w^{(w-1)/2}} \left|\sum_{m=1}^\ell
          g(m)e^{-7.288m}\right|\Biggr)n^{\frac{w-1}2}\sigma_0(n),
    \end{split}
\end{equation}
where $\ell$ is the dimension of the space of cusp forms of weight $w$.
\end{theorem}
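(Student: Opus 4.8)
\medskip

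\noindent\emph{Proof plan.} In shape, the stated estimate is a classical consequence of Petersson's representation of cusp forms by Poincar\'e series together with Weil's bound on Kloosterman sums; the qualitative bound $g(n)=\BigOh_{w,\varepsilon}(n^{(w-1)/2+\varepsilon})$ is standard. The content of the theorem is therefore entirely in making every constant explicit and in tracking the dependence on the weight $w$, and the plan is to follow the classical route while keeping all constants under control. First I would work inside $\S_w$, which has dimension $\ell$, and introduce the Poincar\'e series $P_m$ $(m\geq1)$ of weight $w$; since $P_1,\dots,P_\ell$ span $\S_w$ we may write $G=\sum_{m=1}^\ell c_m P_m$ for unique constants $c_m$. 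The Fourier coefficients of $P_m$ are given by the classical formula
\begin{equation*}
  p_m(n)=\Bigl(\tfrac nm\Bigr)^{\frac{w-1}2}\!\left(\delta_{mn}
  +2\pi i^{-w}\sum_{c=1}^\infty\frac{S(m,n;c)}c\,
  J_{w-1}\!\Bigl(\tfrac{4\pi\sqrt{mn}}c\Bigr)\right),
\end{equation*}
and Petersson's unfolding identity gives $\langle G,P_m\rangle=\frac{\Gamma(w-1)}{(4\pi m)^{w-1}}g(m)$. From $g(n)=\sum_{m=1}^\ell c_m p_m(n)$ there are then two quantities to estimate: the coefficients $c_m$, expressed through the known data $g(1),\dots,g(\ell)$, and the coefficients $p_m(n)$ for $n>\ell$, where the $\delta_{mn}$ term is absent.

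For the first quantity I would use the Gram matrix $\mathbf M=\bigl(\langle P_i,P_j\rangle\bigr)_{1\le i,j\le\ell}$: unfolding turns the relation $\langle G,P_i\rangle=\sum_j c_j\langle P_j,P_i\rangle$ into $(c_m)_m=\mathbf M^{-1}\bigl(\tfrac{\Gamma(w-1)}{(4\pi i)^{w-1}}g(i)\bigr)_i$. The diagonal entry $\langle P_m,P_m\rangle=\frac{\Gamma(w-1)}{(4\pi m)^{w-1}}p_m(m)$ equals $\frac{\Gamma(w-1)}{(4\pi m)^{w-1}}$ to leading order, so after the obvious diagonal rescaling $\mathbf M$ is a small perturbation of the identity; the off-diagonal entries and the corrections $p_m(m)-1$ are controlled by Weil's bound $|S(m,n;c)|\le\sigma_0(c)\gcd(m,n,c)^{1/2}c^{1/2}$ together with an explicit majorant for the Bessel function, which yields an explicit operator-norm bound for $\mathbf M^{-1}$. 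This is the mechanism behind the first term $11\sqrt{\sum_{m\le\ell}|g(m)|^2/m^{w-1}}$, which up to the explicit constant is the Petersson norm $\|G\|$. For the second quantity, with $n>\ell$ one has $p_m(n)=(n/m)^{(w-1)/2}\,2\pi i^{-w}\sum_c c^{-1}S(m,n;c)J_{w-1}(4\pi\sqrt{mn}/c)$; splitting the $c$-sum at $4\pi\sqrt{mn}/c\asymp w$ and using $|J_{w-1}(x)|\le(x/2)^{w-1}/\Gamma(w)$ in the small-argument range and $|J_{w-1}(x)|\le1$ (or the $x^{-1/2}$ decay) in the large-argument range, together with Weil's bound, gives $\sum_c\ll\sigma_0(n)$ with an explicit constant of the shape $e^{O(1)}(\text{const})^{w/2}w^{-(w-1)/2}$ once Stirling's formula is applied to $\Gamma(w)$; this is the origin of the factor $n^{(w-1)/2}\sigma_0(n)$ and of the second term $\tfrac{e^{18.72}(41.41)^{w/2}}{w^{(w-1)/2}}\bigl|\sum_{m\le\ell}g(m)e^{-7.288m}\bigr|$, the precise numerical constants arising from optimizing a free parameter in the split and in the Bessel majorant.

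Assembling $g(n)=\sum_{m=1}^\ell c_m p_m(n)$ with these two inputs produces a bound of exactly the stated form, the $\sqrt{\log w}$ emerging from the final combination of the two pieces and from a dyadic treatment of the tail of the Kloosterman--Bessel sum. The main obstacle is purely quantitative: one needs a clean majorant for $J_{w-1}(4\pi\sqrt{mn}/c)$ that is uniform in $n$ and well-behaved as $w\to\infty$, and an explicit proof that the normalized Poincar\'e series $P_1,\dots,P_\ell$ are well-conditioned (i.e.\ an explicit bound for $\mathbf M^{-1}$); everything else is careful bookkeeping of constants through Stirling's formula and Weil's bound.
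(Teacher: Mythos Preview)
The paper does not contain a proof of this statement at all: it is quoted verbatim as Theorem~1 of Jenkins--Rouse \cite{Jenkins_Rouse2011:bounds_coefficients_cusp} and used as a black box in Section~\ref{sec:numer-exper} to carry out the numerical verification. So there is no ``paper's own proof'' to compare your proposal against.

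That said, your plan is a faithful outline of the actual argument in \cite{Jenkins_Rouse2011:bounds_coefficients_cusp}: span $\S_w$ by Poincar\'e series $P_1,\dots,P_\ell$, use Petersson's formula for their Fourier coefficients, bound the Kloosterman--Bessel sums via Weil's estimate and explicit Bessel majorants, and control the inversion of the Gram matrix to express the $c_m$ in terms of $g(1),\dots,g(\ell)$. The specific numerical constants $11$, $e^{18.72}$, $41.41$, $7.288$ and the factor $\sqrt{\log w}$ come from the particular optimizations Jenkins and Rouse carry out; reproducing those exact values would require redoing their computation rather than just the scheme you describe. For the purposes of the present paper, however, none of this is needed---the theorem is simply invoked, not proved.
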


For an application of this theorem we write an extremal quasimodular form of
depth $r$ as
\begin{equation}\label{eq:gwr-decomp}
  g_w^{(r)}=\sum_{\ell=1}^r c_\ell D^\ell E_{w-2\ell}+
  \sum_{\ell=0}^rD^\ell\alpha_{w-2\ell},
\end{equation}
where $c_1,\ldots,c_r$ are the coefficients computed in
Sections~\ref{sec:depth-1} to~\ref{sec:depth-4} for the according values of
$r$, and $\alpha_{w-2r},\ldots,\alpha_w$ are cusp forms of weights
$w-2r,\ldots,w$. Rewriting the forms $g_w^{(r)}$ is done using the expressions
for derivatives given in \eqref{eq:D-serre}. In order to make this more clear,
we give the according conversion formula for the case $r=1,2$
\begin{align*}
  &A_w+E_2B_{w-2}=\left(A_w-\frac{12}{w-2}\partial_{w-2}B_{w-2}\right)+
  D\left(\frac{12}{w-2}B_{w-2}\right)\\
  &A_w+B_{w-2}E_2+C_{w-4}E_2^2\\
  &=\left(A_w-\frac{12}{w-2}\partial_{w-2}B_{w-2}
    +\frac{144}{(w-2)(w-3)}\partial_{w-4}^2C_{w-4}+\frac1{w-3}E_4C_{w-4}\right)\\
  &+D\left(\frac{12}{w-2}B_{w-2}
    -\frac{288}{(w-2)(w-4)}\partial_{w-4}C_{w-4}\right)\\
&+D^2\left(\frac{144}{(w-3)(w-4)}C_{w-4}\right).
\end{align*}
The cases $r=3,4$ are much more complex; the computations were done using
\texttt{Mathematica}. The \texttt{Mathematica} source code is available at
\cite{Grabner2020:mathematica_files}.

Theorem~\ref{thm-jenkins-rouse} can then be applied to the forms
$\alpha_{w-2\ell}$ ($\ell=0,\ldots,r$) to derive bounds of the form $C_\ell
n^{\frac{w-1}2}\sigma_0(n)$ for the Fourier coefficients of the forms
$D^\ell\alpha_{w-2\ell}$. This gives the bound
$(C_0+\cdots+C_r)n^{\frac{w-1}2}\sigma_0(n)$ for the Fourier coefficient of the
second sum in \eqref{eq:gwr-decomp}.

The Fourier coefficients of the terms in
the first sum are
\begin{equation*}
  c_\ell \frac{2(w-2\ell)}{B_{w-2\ell}}n^\ell\sigma_{w-2\ell-1}(n).
\end{equation*}
For these we use the bounds
\begin{equation*}
  n^{w-2\ell-1}\leq\sigma_{w-2\ell-1}(n)\leq
  n^{w-2\ell-1}\sum_{d\mid n}d^{2\ell+1-w}\leq\zeta(w-2\ell-1)n^{w-2\ell-1}
\end{equation*}
and  $\sigma_0(n)\leq2\sqrt n$
to derive an explicit lower bound for the Fourier coefficients of
$g_w^{(r)}$. This bound is positive for $n\geq N_0$ for an explicitly
computable value $N_0$.

For the remaining finitely many Fourier coefficients
positivity can be checked with the help of a computer. We have performed these
computations for $1\leq r\leq4$ and $w\leq200$.

\begin{acknowledgement}
  The author is grateful to an anonymous referee for the many valuable comments
  that improved the readability of the paper.
\end{acknowledgement}
\providecommand{\bysame}{\leavevmode\hbox to3em{\hrulefill}\thinspace}
\providecommand{\MR}{\relax\ifhmode\unskip\space\fi MR }
\providecommand{\MRhref}[2]{%
  \href{http://www.ams.org/mathscinet-getitem?mr=#1}{#2}
}
\providecommand{\href}[2]{#2}

\end{document}